%


%
\documentclass[a4paper]{amsart}
%


%
\usepackage{color}
\usepackage[latin1]{inputenc}
\usepackage[T1]{fontenc}
\usepackage{amsfonts}
\usepackage{amssymb}
\usepackage{amsmath}
\usepackage{amsthm}
\usepackage{stmaryrd}
\usepackage{eufrak}
%


%
\usepackage{graphicx,type1cm,eso-pic,color}
\usepackage{pstricks,pst-plot,pstricks-add}
\usepackage{float}
%


%
%


%
\newtheorem{theorem}{Theorem}[section]

\newtheorem{proposition}[theorem]{Proposition}

\newtheorem{definition}[theorem]{Definition}
\newtheorem{assumption}[theorem]{Assumption}

%


%
\begin{document}
\setlength\arraycolsep{2pt}
\title{On a Constrained Fractional Stochastic Volatility Model}
\author{Nicolas MARIE}
\address{Laboratoire Modal'X, Universit\'e Paris 10, Nanterre, France}
\email{nmarie@u-paris10.fr}
\address{Laboratoire ISTI, ESME Sudria, Paris, France}
\email{marie@esme.fr}
\keywords{Financial model ; Stochastic volatility ; Fractional Brownian motion ; Viability condition.}
\date{}
\maketitle
%


%
\begin{abstract}
This paper deals with an extension of the so-called Black-Scholes model in which the volatility is modeled by a linear combination of the components of the solution of a differential equation driven by a fractional Brownian motion of Hurst parameter greater than $1/4$. In order to ensure the positiveness of the volatility, the coefficients of that equation satisfy a viability condition. The absence of arbitrages, the completeness of the market and a pricing formula are established.
\end{abstract}
\tableofcontents
\noindent
\textbf{MSC2010 :} 60H10, 91G80.
\\
\\
\textbf{Acknowledgments.} Many thanks to Wassim Hamra, who read this work during his MSc thesis and made some constructive comments.
%


%
\section{Introduction}
This paper deals with an extension of the so-called Black-Scholes model in which the volatility is modeled by a linear combination of the components of the solution of a differential equation driven by a fractional Brownian motion $B$ of Hurst parameter $H\in ]1/4,1[$. Precisely, the volatility is modeled by the $d$-dimensional ($d\in\mathbb N\backslash\{0,1\}$) process $V := (V(t))_{t\in [0,T]}$ such that
\begin{displaymath}
V(t) :=
\sum_{k = 1}^{d}\langle h_k,U(t)\rangle e_k
\end{displaymath}
where, $h_1,\dots,h_d\in\mathbb R^d\backslash\{0\}$ and $U := (U(t))_{t\in [0,T]}$ is the solution of the pathwise differential equation
\begin{displaymath}
U(\omega,t) =
u_0 +\int_{0}^{t}\mu(\omega,U(\omega,s))ds +\int_{0}^{t}\sigma(\omega,U(\omega,s))\textrm{ $\circ$}dB(\omega,s).
\end{displaymath}
The conditions on $\mu$ and $\sigma$ are stated at Section 2 and Appendix B, and are in accordance with the pathwise stochastic calculus framework (see Friz and Victoir \cite{FV10}).
\\
\\
For financial applications, the components of the process $V$ should be positive. There exists several ways to ensure the positiveness of the solution of a differential equation. One of them consists to choose $\mu$ and $\sigma$ having a singular behavior around zero, and to prove that the components of the solution stay positive because they cannot hit zero. About these models, in It\^o's calculus framework, see Karlin and Taylor \cite{KT81}, and in the pathwise stochastic calculus framework, see for instance Marie \cite{MARIE15_singular}. A famous example is the Cox-Ingersoll-Ross equation, which models the volatility in the Heston model for instance. About the Heston model, in It\^o's calculus framework see Heston \cite{HESTON93}, and in the pathwise stochastic calculus framework, see for instance Comte et al. \cite{CCR12} or Marie \cite{MARIE15_singular}. Another way to constrain the solution of a differential equation, which is used in this paper, is to assume that $\mu$ and $\sigma$ satisfy a viability condition. For ordinary differential equations, see Aubin \cite{AUBIN90} and Aubin et al. \cite{ABS11}, in It\^o's calculus framework, see the seminal paper \cite{AD90} of J.P. Aubin and G. DaPrato, and in the pathwise stochastic calculus framework, see Ciotir and Rascanu \cite{CR08} and Coutin and Marie \cite{CM16}. The main results of Coutin and Marie \cite{CM16} are summarized at the end of Appendix B.
\\
\\
Fractional market models have been studied by several authors as L.C.G. Rogers, who proved in \cite{ROGERS97} the existence of arbitrages in these models, or P. Cheridito who suggested in \cite{CHERIDITO01} to use the mixed fractional Brownian motion in order to bypass that difficulty. In this paper, as presented at Section 2, the fractional Brownian motion involves only in the equation which models the volatility. So, via some specific assumptions stated at Section 3, it is possible to ensure the absence of arbitrages and the completeness of the market, and to prove a pricing formula. The market is complete under a leverage condition as in Vilela Mendes et al. \cite{VOR15}.
\\
\\
Empirically, as established in Gatheral et al. \cite{GJR14}, the volatility process has $\alpha$-H\"older continuous paths with $\alpha\in ]0,1/2[$. That is taken into account by the model studied in this paper because in the pathwise stochastic calculus framework, the memory of the solution and the regularity of its paths are controlled by the Hurst parameter of the fractional Brownian motion which can belong to $]1/4,1/2[$ here.
\\
\\
Some simulations of the model are provided at Section 4.
\\
\\
The following notations are used throughout the paper. Some specific notations, related to the fractional Brownian motion and to the pathwise stochastic calculus, are introduced at appendices A and B respectively. 
\\
\\
\textbf{Notations :} Consider $d,e\in\mathbb N^*$.
\begin{enumerate}
 \item The euclidean scalar product (resp. norm) on $\mathbb R^d$ is denoted by $\langle .,.\rangle$ (resp. $\|.\|$). For every $x\in\mathbb R^d$, its $j$-th coordinate with respect to the canonical basis $(e_k)_{k\in\llbracket 1,d\rrbracket}$ of $\mathbb R^d$ is denoted by $x^{(j)}$ for every $j\in\llbracket 1,d\rrbracket$.
 \item For every $a,v\in\mathbb R^d$, $\Pi(a,v)$ denotes the half-space of $\mathbb R^d$ defined by $a$ and $v$ :
 \begin{displaymath}
 \Pi(a,v) :=
 \{x\in\mathbb R^d :\langle v,x - a\rangle\leqslant 0\}.
 \end{displaymath}
 \item The space of the matrices of size $d\times e$ is denoted by $\mathcal M_{d,e}(\mathbb R)$. For every $M\in\mathcal M_{d,e}(\mathbb R)$, its $(i,j)$-th coordinate with respect to the canonical basis $(e_{k,l})_{(k,l)\in\llbracket 1,d\rrbracket\times \llbracket 1,e\rrbracket}$ of $\mathcal M_{d,e}(\mathbb R)$ is denoted by $M_{i,j}$ for every $(i,j)\in\llbracket 1,d\rrbracket\times \llbracket 1,e\rrbracket$.
 \item The space of the continuous functions from $[0,T]$ into $\mathbb R^d$ is denoted by $C^0([0,T],\mathbb R^d)$ and equipped with the uniform norm $\|.\|_{\infty,T}$ such that :
 \begin{displaymath}
 \|f\|_{\infty,T} :=
 \sup_{t\in [0,T]}
 \|f(t)\|
 \textrm{ $;$ }
 \forall f\in C^0([0,T],\mathbb R^d).
 \end{displaymath}
 \item The set of all the dissections of an interval $I$ of $\mathbb R_+$ is denoted by $\mathfrak D_I$.
 \item Consider $p\in [1,\infty[$ and $s,t\in [0,T]$ such that $s < t$. A continuous function $x : [s,t]\rightarrow\mathbb R^d$ is of finite $p$-variation if and only if,
 \begin{eqnarray*}
  \|x\|_{p\textrm{-var},s,t} & := &
  \sup\left\{\left|
  \sum_{k = 1}^{n - 1}\|x(t_{k + 1}) - x(t_k)\|^p\right|^{1/p}\textrm{$;$ }
  n\in\mathbb N^*\textrm{ and }
  (t_k)_{k\in\llbracket 1,n\rrbracket}\in\mathfrak D_{[s,t]}\right\}\\
  & < & \infty.
 \end{eqnarray*}
 Consider the vector space
 \begin{displaymath}
 C^{p\textrm{-var}}([s,t],\mathbb R^d) :=
 \{x\in C^0([s,t],\mathbb R^d) :\|x\|_{p\textrm{-var},s,t} <\infty\}.
 \end{displaymath}
 The map $\|.\|_{p\textrm{-var},s,t}$ is a semi-norm on $C^{p\textrm{-var}}([s,t],\mathbb R^d)$.
 \item Consider a filtered probability space $(\Omega,\mathcal F,\mathbb F,\mathbb P)$, and a $\mathbb F$-martingale $M := (M(t))_{t\in [0,T]}$. The exponential martingale associated to $M$ and $q > 0$ is denoted by $\mathcal E^q(M)$ :
 \begin{displaymath}
 \mathcal E^q(M)(t) :=
 \exp\left(-\frac{q^2}{2}\langle M\rangle_t + qM(t)\right)
 \textrm{ $;$ }
 \forall t\in [0,T].
 \end{displaymath}
 In particular, $\mathcal E(M) :=\mathcal E^1(M)$.
 \item The space of the $d$-dimensional processes $Z := (Z(t))_{t\in [0,T]}$, adapted to $\mathbb F$, with continuous paths, and such that
 \begin{displaymath}
 \mathbb E\left(\int_{0}^{T}\|Z(s)\|^2ds\right)
 <\infty
 \end{displaymath}
 is denoted by $\mathbb H^2$.
\end{enumerate}
%


%
\section{The model}
This section introduces the multidimensional fractional stochastic volatility model studied in this paper. Consider a filtered probability space $(\Omega,\mathcal F,\mathbb F,\mathbb P)$ with $\mathcal F_T =\mathcal F$. The first subsection deals with the pathwise differential equation which models the volatility of the assets and is driven by $B := (B(t))_{t\in [0,T]}$, a $d$-dimensional fractional Brownian motion of Hurst parameter $H\in ]1/4,1[$ defined on $(\Omega,\mathcal F,\mathbb P)$ and adapted to $\mathbb F$. The second subsection deals with the stochastic differential equation which models the prices of the assets and is driven by $W := (W(t))_{t\in [0,T]}$, a $d$-dimensional Brownian motion defined on $(\Omega,\mathcal F,\mathbb P)$ and adapted to $\mathbb F$.
%


%
\subsection{The volatility of the assets}
Consider $\mu :\Omega\times\mathbb R^d\rightarrow\mathbb R^d$ and $\sigma :\Omega\times\mathbb R^d\rightarrow\mathcal M_d(\mathbb R)$, two maps satisfying assumptions \ref{existence_uniqueness_assumption} and \ref{integrability_assumption}. By Theorem \ref{existence_uniqueness_Young}, the pathwise differential equation
\begin{equation}\label{volatility_equation}
U(\omega,t) =
u_0 +\int_{0}^{t}\mu(\omega,U(\omega,s))ds +\int_{0}^{t}\sigma(\omega,U(\omega,s))\textrm{ $\circ$}dB(\omega,s)
\end{equation}
has a unique solution $U := (U(t))_{t\in [0,T]}$, and for an arbitrarily chosen $p > 1/H$, there exists a deterministic constant $C > 0$ such that for every $q > 0$,
\begin{equation}\label{control_volatility}
\|U\|_{\infty,T}
\leqslant
C\exp(CM(\mu,\sigma)(T^p + M_p(B)))
\in L^q(\Omega,\mathcal F,\mathbb P).
\end{equation}
Let $V := (V(t))_{t\in [0,T]}$ be the process defined by
\begin{displaymath}
V(t) :=\sum_{k = 1}^{d}\langle h_k,U(t)\rangle e_k
\textrm{ $;$ }
\forall t\in [0,T]
\end{displaymath}
where, $h_1,\dots,h_d\in\mathbb R^d\backslash\{0\}$. In order to model the volatility of the assets, the components of the process $V$ have to be positive, and it is true if and only if,
\begin{displaymath}
U(t)\in K :=\bigcap_{k = 1}^{d}\Pi(0,-h_k)
\textrm{ $;$ }
\forall t\in [0,T].
\end{displaymath}
The set $K$ is a convex polyhedron defined by the half-spaces $\Pi(0,-h_k)$ ; $k\in\llbracket 1,d\rrbracket$. Therefore, by Theorem \ref{viability_theorem}, the components of $V$ are positive if and only if $\mu$ and $\sigma$ satisfy the following assumption.
%


%
\begin{assumption}\label{viability_assumption_1}
For every $\omega\in\Omega$, $k\in\llbracket 1,d\rrbracket$ and $x\in\partial\Pi(0,-h_k)$,
\begin{displaymath}
\langle h_k,\mu(\omega,x)\rangle\geqslant 0
\end{displaymath}
and
\begin{displaymath}
\langle h_k,\sigma_{.,j}(\omega,x)\rangle = 0
\textrm{ $;$ }
\forall j\in\llbracket 1,d\rrbracket.
\end{displaymath}
\end{assumption}
\noindent
In order to model the volatility of the assets, the process $V$ has to be adapted to $\mathbb F$. Then, the maps $\mu$ and $\sigma$ have to satisfy the following assumption.
%


%
\begin{assumption}\label{measurability_assumption}
The maps $\mu(.,Z(.,t))$ and $\sigma(.,Z(.,t))$ are $\mathcal F_t$-measurable for every process $(Z(t))_{t\in [0,T]}$ adapted to $\mathbb F$ and $t\in [0,T]$.
\end{assumption}
\noindent
\textbf{Example.} Let $\xi :\Omega\rightarrow\mathbb R$ be a $\mathcal F_0$-measurable random variable, and assume there exists $\mu^* :\mathbb R\times\mathbb R^d\rightarrow\mathbb R^d$ and $\sigma^* :\mathbb R\times\mathbb R^d\rightarrow\mathcal M_d(\mathbb R)$ such that
\begin{displaymath}
\mu(\omega,y) =\mu^*(\xi(\omega),y)
\textrm{ and }
\sigma(\omega,y) =\sigma^*(\xi(\omega),y)
\end{displaymath}
for every $\omega\in\Omega$ and $y\in\mathbb R^d$. If the maps $\mu^*(x,.)$ and $\sigma^*(x,.)$ are continuous for every $x\in\mathbb R$, then $\mu$ and $\sigma$ satisfy Assumption \ref{measurability_assumption}.
\\
\\
Under assumptions \ref{existence_uniqueness_assumption}, \ref{integrability_assumption} and \ref{measurability_assumption}, $V\in\mathbb H^2$. Indeed, $V$ is adapted to $\mathbb F$ because the It\^o map associated to Equation (\ref{volatility_equation}) is continuous with respect to the driving signal, and by Equation \mbox{(\ref{control_volatility}) :}
\begin{displaymath}
\mathbb E\left(
\int_{0}^{T}|V(s)^{(k)}|^2ds\right)
\leqslant T\|h_k\|^2\mathbb E(\|U\|_{\infty,T}^{2}) <\infty
\end{displaymath}
for every $k\in\llbracket 1,d\rrbracket$.
%


%
\subsection{The prices of the assets}
The process $S_0 := (S_0(t))_{t\in [0,T]}$ of the prices of the risk-free asset is the solution of the ordinary differential equation
\begin{equation}\label{risk_free_equation}
S^0(t) = S_{0}^{0} + r\int_{0}^{t}S^0(u)du
\end{equation}
with $r > 0$. The process $S := (S(t))_{t\in [0,T]}$ of the prices of the risky assets is the solution of the $d$-dimensional stochastic differential equation
\begin{equation}\label{risky_equation}
S(t) =
S_0 +\int_{0}^{t}b(S(u))du +
\int_{0}^{t}f(S(u),U(u))dW(u)
\end{equation}
where, $b :\mathbb R^d\rightarrow\mathbb R^d$ is the map defined by
\begin{displaymath}
b(x) :=
\sum_{k = 1}^{d}
b_kx^{(k)}e_k
\textrm{ $;$ }
\forall x\in\mathbb R^d
\end{displaymath}
with $b_1,\dots,b_d\in\mathbb R^d$, and $f :\mathbb R^d\times\mathbb R^d\rightarrow\mathcal M_d(\mathbb R)$ is the map defined by
\begin{displaymath}
f(x,y) :=
\sum_{k = 1}^{d}x^{(k)}
\langle h_k,y\rangle e_{k,k}
\textrm{ $;$ }
\forall (x,y)\in\mathbb R^d\times\mathbb R^d.
\end{displaymath}
For every $t\in [0,T]$ and $k\in\llbracket 1,d\rrbracket$, by It\^o's formula :
\begin{equation}\label{Black_Scholes_solution}
S(t)^{(k)} =
S_{0}^{(k)}
\exp\left(
\int_{0}^{t}\left(b_k -\frac{1}{2}
|V(s)^{(k)}|^2\right)ds +
\int_{0}^{t}V(s)^{(k)}dW(s)^{(k)}
\right).
\end{equation}
%


%
\section{No-arbitrage, completeness and pricing formula}
This section deals with the viability and the completeness of the financial market modeled by Equation (\ref{volatility_equation})-(\ref{risk_free_equation})-(\ref{risky_equation}). On the one hand, in order to show the existence of a risk-neutral probability measure via Girsanov's theorem, Assumption \ref{viability_assumption_1} has to be improved. On the other hand, in order to get the completeness of the market, the volatility and the prices of the assets have to depend on the same source of randomness (see Vilela Mendes et al. \cite{VOR15}). In order to show the uniqueness of the risk-neutral probability measure, the fractional Brownian motion $B$ will be defined with respect to the Brownian motion $W$ via the Decreusefond-Ust\"unel transformation (see Appendix A).
%


%
\subsection{Existence of a risk-neutral probability measure}
In order to show the existence of a risk-neutral probability measure, which ensures the crucial no-arbitrage condition of the market, the behavior of $V$ around $0$ has to be controlled.
\\
\\
The components of the process $V$ are strictly positive if and only if, for every $\omega\in\Omega$, there exists $\xi(\omega) > 0$ such that for every $k\in\llbracket 1,d\rrbracket$ and $t\in [0,T]$, $V(\omega,t)^{(k)}\geqslant\xi(\omega)$. By construction, that is true when
\begin{displaymath}
U(\omega,t)\in K(\omega) :=
\bigcap_{k = 1}^{d}\Pi\left(
\frac{\xi(\omega)}{h_{k}^{(i_k)}}e_{i_k},-h_k\right)
\textrm{$;$ }
\forall\omega\in\Omega
\textrm{$,$ }
\forall t\in [0,T]
\end{displaymath}
where, for every $k\in\llbracket 1,d\rrbracket$, $i_k\in\llbracket 1,d\rrbracket$ and $h_{k}^{(i_k)}\not= 0$. For every $\omega\in\Omega$, $K(\omega)$ is a convex polyhedron. Therefore, by Theorem \ref{viability_theorem}, the components of $V$ are positive when $\mu$ and $\sigma$ satisfy the following assumption.
%


%
\begin{assumption}\label{viability_assumption_2}
For every $\omega\in\Omega$, $k\in\llbracket 1,d\rrbracket$ and $x\in\partial\Pi(\xi(\omega)/h_{k}^{(i_k)}e_{i_k},-h_k)$,
\begin{displaymath}
\langle h_k,\mu(\omega,x)\rangle\geqslant 0
\end{displaymath}
and
\begin{displaymath}
\langle h_k,\sigma_{.,j}(\omega,x)\rangle = 0
\textrm{ $;$ }
\forall j\in\llbracket 1,d\rrbracket.
\end{displaymath}
\end{assumption}
\noindent
In the sequel, the map $\xi :\Omega\rightarrow\mathbb R_{+}^{*}$ satisfies the following assumption.
%


%
\begin{assumption}\label{integrability_xi}
$\xi$ is $\mathcal F_0$-measurable, $\xi(\Omega)\subset [0,m]$ with $m > 0$, and for every $q > 0$,
\begin{displaymath}
\mathbb E\left(\exp\left(\frac{q}{\xi^2}\right)\right) <\infty.
\end{displaymath}
\end{assumption}
\noindent
\textbf{Examples :}
\begin{enumerate}
 \item If $\xi$ is constant, then it satisfies Assumption \ref{integrability_xi}.
 \item Assume that
 \begin{displaymath}
 \left.\frac{d\mathbb P_{\xi}}{dx}\right|_{\mathcal F_0} =
 \lambda_1\exp\left(
 -\frac{\lambda_2}{x^n}\right)\mathbf 1_{]0,\lambda_3]}(x)
 \end{displaymath}
 with $n\in\mathbb N\backslash\{0,1\}$, and $\lambda_1,\lambda_2,\lambda_3 > 0$ satisfying
 \begin{displaymath}
 \lambda_1\int_{0}^{\lambda_3}\exp\left(
 -\frac{\lambda_2}{x^n}\right)dx = 1.
 \end{displaymath}
 Let $q > 0$ be arbitrarily chosen. By the transfer theorem :
 \begin{displaymath}
 \mathbb E\left(
 \exp\left(\frac{q}{\xi^2}\right)\right) =
 \lambda_1\int_{1/\lambda_3}^{\infty}
 e^{qx^2 -\lambda_2x^n}\frac{dx}{x^2}.
 \end{displaymath}
 So, if $n > 2$, then $\xi$ satisfies Assumption \ref{integrability_xi}.
 \\
 \\
 That distribution is interesting because it curbs the components of $V$ if and only if they go under the level $\lambda_3$.
\end{enumerate}
%


%
\begin{proposition}\label{no_arbitrage}
Under assumptions \ref{existence_uniqueness_assumption}, \ref{integrability_assumption}, \ref{measurability_assumption}, \ref{viability_assumption_2} and \ref{integrability_xi}, there exists at least one risk-neutral probability measure $\mathbb P^*\sim\mathbb P$.
\end{proposition}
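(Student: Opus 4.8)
The plan is to construct $\mathbb P^*$ from $\mathbb P$ via Girsanov's theorem, the change of measure being governed by the market price of risk process
\begin{displaymath}
\theta(t) :=\sum_{k = 1}^{d}\frac{b_k - r}{V(t)^{(k)}}\,e_k
\textrm{ $;$ }
\forall t\in [0,T].
\end{displaymath}
First I would check that $\theta$ is well-defined and admissible for Girsanov's theorem. By the discussion preceding Assumption \ref{viability_assumption_2} (Theorem \ref{viability_theorem} applied to the convex polyhedron $K(\omega)$), under Assumption \ref{viability_assumption_2} one has $U(\omega,t)\in K(\omega)$, hence $V(\omega,t)^{(k)}\geqslant\xi(\omega) > 0$, for every $\omega\in\Omega$, $t\in [0,T]$ and $k\in\llbracket 1,d\rrbracket$. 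Since moreover $V\in\mathbb H^2$ is $\mathbb F$-adapted with continuous paths (Section 2, under Assumption \ref{measurability_assumption}), the process $\theta$ is $\mathbb F$-adapted, has continuous paths, and satisfies the $\mathbb P$-almost sure bound $\int_{0}^{T}\|\theta(s)\|^2ds\leqslant cT/\xi^2$, where $c :=\sum_{k = 1}^{d}(b_k - r)^2$.

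The main step is the verification of Novikov's condition for the continuous local martingale $M$ defined by $M(t) := -\int_{0}^{t}\langle\theta(s),dW(s)\rangle$. Its bracket being $\langle M\rangle_t =\int_{0}^{t}\|\theta(s)\|^2ds$, the previous bound yields
\begin{displaymath}
\mathbb E\left(\exp\left(\frac{1}{2}\langle M\rangle_T\right)\right)\leqslant
\mathbb E\left(\exp\left(\frac{cT}{2\xi^2}\right)\right) <\infty
\end{displaymath}
by Assumption \ref{integrability_xi} with $q = cT/2$. Hence $\mathcal E(M)$ is a uniformly integrable $\mathbb P$-martingale, and one can define a probability measure $\mathbb P^*$ on $\mathcal F =\mathcal F_T$ by
\begin{displaymath}
\frac{d\mathbb P^*}{d\mathbb P} :=\mathcal E(M)(T).
\end{displaymath}
Since $\mathcal E(M)(T) > 0$ $\mathbb P$-almost surely, $\mathbb P^*\sim\mathbb P$.

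It then remains to see that $\mathbb P^*$ is risk-neutral. By Girsanov's theorem, $W^*(t) := W(t) +\int_{0}^{t}\theta(s)ds$ is a $d$-dimensional $\mathbb P^*$-Brownian motion; substituting $dW(t)^{(k)} = dW^*(t)^{(k)} -\theta(t)^{(k)}dt$ into Equation (\ref{risky_equation}) and using $V(t)^{(k)}\theta(t)^{(k)} = b_k - r$, the $k$-th discounted price satisfies $d(e^{-rt}S(t)^{(k)}) = e^{-rt}S(t)^{(k)}V(t)^{(k)}dW^*(t)^{(k)}$. Thus $(e^{-rt}S(t)^{(k)})_{t\in [0,T]}$ is a nonnegative $\mathbb P^*$-local martingale for each $k\in\llbracket 1,d\rrbracket$, i.e. $\mathbb P^*$ is an equivalent local martingale measure for the discounted price process, which — together with $\mathbb P^*\sim\mathbb P$ — is the announced no-arbitrage property.

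The crucial point is the second step: the lower bound $\xi$ on the volatility, produced by the viability of $K$, enters squared in the denominator of $\|\theta\|^2$, so Novikov's condition reduces precisely to an exponential-moment bound on $1/\xi^2$ — which is exactly the content of Assumption \ref{integrability_xi}. The remaining ingredients are routine: the adaptedness and path continuity of $\theta$ (inherited from $V$), the applicability of Theorem \ref{viability_theorem} (for which $u_0$ must lie in every $K(\omega)$), and the drift cancellation under $\mathbb P^*$.
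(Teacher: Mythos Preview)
Your proof is correct and follows essentially the same route as the paper: define the market-price-of-risk process, verify Novikov via the lower bound $V^{(k)}\geqslant\xi$ and Assumption \ref{integrability_xi}, and apply Girsanov. The sign convention differs (the paper takes $\theta^{(k)}=(r-b_k)/V^{(k)}$ and $W^*=W-\int\theta\,ds$, you take the negatives of both), but the resulting $\mathbb P^*$ is identical; your conclusion that the discounted prices are nonnegative $\mathbb P^*$-local martingales is in fact slightly more careful than the paper's bare assertion that $\widetilde S$ is a $\mathbb P^*$-martingale.
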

%


%
\begin{proof}
Consider the $d$-dimensional process $\theta := (\theta(t))_{t\in [0,T]}$ such that :
\begin{displaymath}
\theta(t) :=
\sum_{k = 1}^{d}
\frac{r - b_k}{V(t)^{(k)}}e_k
\textrm{ $;$ }
\forall t\in [0,T].
\end{displaymath}
Since $V\in\mathbb H^2$ (see Subsection 2.1), $\theta$ is adapted to $\mathbb F$. By Assumption \ref{integrability_xi} :
\begin{eqnarray*}
 \mathbb E
 \left(\exp\left(\frac{1}{2}\int_{0}^{T}\|\theta(s)\|^2ds\right)\right) & = &
 \mathbb E\left(\exp\left(
 \frac{1}{2}\sum_{k = 1}^{d}\int_{0}^{T}
 \left|\frac{r - b_k}{V(s)^{(k)}}\right|^2ds\right)\right)\\
 & \leqslant &
 \mathbb E\left(
 \exp\left(\frac{d\beta T}{2\xi^2}\right)\right) <\infty
\end{eqnarray*}
where,
\begin{displaymath}
\beta :=
\max_{k\in\llbracket 1,d\rrbracket}
|r - b_k|^2.
\end{displaymath}
So, by Girsanov's theorem (see Bj\"ork \cite{BJORK09}, Theorem 11.3), under the probability measure $\mathbb P^*$ such that
\begin{displaymath}
\left.\frac{d\mathbb P^*}{d\mathbb P}\right|_{\mathcal F_T} =
\mathcal E\left(
\int_{0}^{.}\langle\theta(s),dW(s)\rangle\right)(T),
\end{displaymath}
the following process is a Brownian motion adapted to $\mathbb F$ :
\begin{displaymath}
W^* := W -\int_{0}^{.}\theta(s)ds.
\end{displaymath}
Let $\widetilde S := S/S^0$ be the discounted prices process of the risky assets. By It\^o's \mbox{formula :}
\begin{displaymath}
\widetilde S(t) =
\widetilde S(0) +\int_{0}^{t}f(\widetilde S(u),U(u))dW^*(u).
\end{displaymath}
Therefore, under $\mathbb P^*$, since $W^*$ is a Brownian motion adapted to $\mathbb F$, $\widetilde S$ is a $\mathbb F$-martingale.
\end{proof}
%


%
\subsection{Leverage, completeness and pricing formula}
As mentioned in Vilela Mendes et al. \cite{VOR15}, a way to get the completeness of a market modeled by a fractional stochastic volatility model and a pricing formula is to consider the fractional Brownian motion $B$ associated to $W$ by the Decreusefond-Ust\"unel transformation (see Appendix A). There is a leverage effect.
\\
\\
Let $(\Omega,\mathcal F,\mathbb P)$ be the canonical probability space for $W$, and consider the process $B := (B(t))_{t\in [0,T]}$ defined by
\begin{displaymath}
B(t) :=
\int_{0}^{t}K_H(t,s)dW(s)
\textrm{ $;$ }
\forall t\in [0,T].
\end{displaymath}
By Theorem \ref{DU_representation}, $B$ is a fractional Brownian motion of Hurst parameter $H$ generating the same filtration $\mathbb F$ than $W$.
\\
\\
In this subsection, the probability space $(\Omega,\mathcal F,\mathbb P)$ is equipped with the filtration $\mathbb F$ generated by $W$ and $B$. That is crucial in order to apply Girsanov's converse (see Bj\"ork \cite{BJORK09}, Theorem 11.6) to get the completeness of the market modeled by (\ref{volatility_equation})-(\ref{risk_free_equation})-(\ref{risky_equation}).
%


%
\begin{proposition}\label{completeness}
Under assumptions \ref{existence_uniqueness_assumption}, \ref{integrability_assumption}, \ref{measurability_assumption}, \ref{viability_assumption_2} and \ref{integrability_xi}, there exists a unique risk-neutral probability measure $\mathbb P^*\sim\mathbb P$.
\end{proposition}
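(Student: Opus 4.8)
The plan is to combine Proposition \ref{no_arbitrage}, which already provides the existence of a risk-neutral measure $\mathbb P^*$, with an application of Girsanov's converse (Bj\"ork \cite{BJORK09}, Theorem 11.6), which is exactly why the probability space was equipped in this subsection with the filtration $\mathbb F$ generated \emph{jointly} by $W$ and $B$. Concretely, one argues that any risk-neutral measure must be of the exponential-martingale form coming from a predictable Girsanov kernel, and that the martingale property of the discounted price process $\widetilde S = S/S^0$ forces that kernel to equal the market price of risk process $\theta$ defined in the proof of Proposition \ref{no_arbitrage}. Since $\theta$ is uniquely determined by the model coefficients (the $b_k$, $r$ and the volatility components $V^{(k)}$, which are themselves measurable functions of the driving signals), this pins down $\mathbb P^*$ uniquely.

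First I would recall that under assumptions \ref{existence_uniqueness_assumption}, \ref{integrability_assumption}, \ref{measurability_assumption}, \ref{viability_assumption_2} and \ref{integrability_xi}, Proposition \ref{no_arbitrage} already yields a risk-neutral $\mathbb P^*\sim\mathbb P$, so only uniqueness remains. Then I would let $\mathbb Q\sim\mathbb P$ be any risk-neutral probability measure on $\mathcal F_T$. Because $\mathbb F$ is the filtration generated by the Brownian motion $W$ (the canonical space for $W$, together with $B$ which is itself a measurable functional of $W$ via the Decreusefond--Ust\"unel kernel $K_H$, so it adds no information), the martingale representation theorem applies on this space: the density process $L(t) := \mathbb E(d\mathbb Q/d\mathbb P \mid \mathcal F_t)$ is a strictly positive $\mathbb P$-martingale, hence of the form $L = \mathcal E(\int_0^{\cdot}\langle\varphi(s),dW(s)\rangle)$ for some $\mathbb F$-predictable $\mathbb R^d$-valued process $\varphi$. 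By Girsanov's theorem, $W^{\varphi} := W - \int_0^{\cdot}\varphi(s)\,ds$ is a $\mathbb Q$-Brownian motion, and the dynamics of the discounted risky prices become
\begin{displaymath}
\widetilde S(t) = \widetilde S(0) + \int_0^t f(\widetilde S(u),U(u))\bigl(dW^{\varphi}(u) + (\varphi(u) - \theta(u))\,du\bigr).
\end{displaymath}
For $\widetilde S$ to be a $\mathbb Q$-local martingale, the finite-variation term must vanish, i.e. $f(\widetilde S(u),U(u))(\varphi(u)-\theta(u)) = 0$ for $dt\otimes d\mathbb Q$-almost every $(u,\omega)$. Since $f(\widetilde S(u),U(u))$ is the diagonal matrix with entries $\widetilde S(u)^{(k)}V(u)^{(k)}$, and both $\widetilde S(u)^{(k)} > 0$ (by the exponential form \eqref{Black_Scholes_solution} of $S^{(k)}$) and $V(u)^{(k)}\geqslant\xi > 0$ (by Assumption \ref{viability_assumption_2} together with Assumption \ref{integrability_xi}), this diagonal matrix is invertible, forcing $\varphi = \theta$ up to a $dt\otimes d\mathbb Q$-null set. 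Hence $L(T) = \mathcal E(\int_0^{\cdot}\langle\theta(s),dW(s)\rangle)(T)$, which is precisely the density of the $\mathbb P^*$ constructed in Proposition \ref{no_arbitrage}, so $\mathbb Q = \mathbb P^*$.

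The main obstacle is justifying the martingale representation step cleanly: one must be certain that enlarging the Brownian filtration by $B$ does not destroy the representation property, which is true here only because $B(t) = \int_0^t K_H(t,s)\,dW(s)$ is adapted to the Brownian filtration (Theorem \ref{DU_representation}), so $\mathbb F$ coincides with the completed natural filtration of $W$ and the standard Brownian martingale representation theorem applies verbatim. A secondary technical point is checking that $L$ is a genuine martingale (not merely a local one) so that $\mathbb Q$ is a bona fide probability measure of total mass one; but since we assumed $\mathbb Q\sim\mathbb P$ is a probability measure at the outset, $L$ is automatically a uniformly integrable martingale closed by $d\mathbb Q/d\mathbb P$, and this is not an issue. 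I would also remark that the same positivity of the volatility components that guarantees existence (via the Novikov-type bound in the proof of Proposition \ref{no_arbitrage}) is exactly what guarantees uniqueness here, which is the content of the ``leverage condition'' alluded to in the text.
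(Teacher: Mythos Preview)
Your proof is correct and follows essentially the same route as the paper: existence from Proposition \ref{no_arbitrage}, then uniqueness via Girsanov's converse (equivalently, martingale representation for the density process since $\mathbb F$ is the Brownian filtration by Theorem \ref{DU_representation}), which forces the Girsanov kernel to coincide with $\theta$ because the drift of $\widetilde S$ must vanish. You are somewhat more explicit than the paper about why $\mathbb F$ coincides with the filtration of $W$ and about the invertibility of the diagonal matrix $f(\widetilde S(u),U(u))$, but the argument is the same.
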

%


%
\begin{proof}
As established at Proposition \ref{no_arbitrage}, the probability measure $\mathbb P^*$ such that
\begin{displaymath}
\left.\frac{d\mathbb P^*}{d\mathbb P}\right|_{\mathcal F_T} =
\mathcal E\left(
\int_{0}^{.}\langle\theta(s),dW(s)\rangle\right)(T)
\end{displaymath}
is risk-neutral. Assume there exists another probability measure $\mathbb P^{**}\sim\mathbb P$ such that $\widetilde S$ is a $\mathbb F$-martingale. By Girsanov's converse (see Bj\"ork \cite{BJORK09}, Theorem 11.6), there exists a $d$-dimensional process $\psi := (\psi(t))_{t\in [0,T]}$, adapted to $\mathbb F$, such that
\begin{equation}\label{completeness_1}
W^{**} :=
W -\int_{0}^{.}\psi(s)ds
\end{equation}
is a Brownian motion adapted to $\mathbb F$, and
\begin{equation}\label{completeness_2}
\left.\frac{d\mathbb P^{**}}{d\mathbb P}\right|_{\mathcal F_t} =
\mathcal E\left(\int_{0}^{.}
\langle\psi(s),dW(s)\rangle\right)(t)
\textrm{ $;$ }
\forall t\in [0,T].
\end{equation}
Let $k\in\llbracket 1,d\rrbracket$ be arbitrarily chosen. By Equation (\ref{completeness_1}) :
\begin{eqnarray*}
 \widetilde S(t)^{(k)} & = &
 \widetilde S(0)^{(k)} +
 \int_{0}^{t}
 (b_k - r + V(u)^{(k)}\psi(u)^{(k)})\widetilde S(u)^{(k)}du +\\
 & &
 \int_{0}^{t}
 V(u)^{(k)}\widetilde S(u)^{(k)}dW^{**}(u)^{(k)}.
\end{eqnarray*}
So, since $\widetilde S$ is a $\mathbb F$-martingale under $\mathbb P^{**}$ :
\begin{displaymath}
b_k - r + V(t)^{(k)}\psi(t)^{(k)} = 0
\textrm{ $;$ }
\forall t\in [0,T].
\end{displaymath}
In other words, $\psi =\theta$. Therefore, $\mathbb P^{**} =\mathbb P^*$ by Equation (\ref{completeness_2}).
\end{proof}
%


%
\begin{proposition}\label{pricing_formula}
Consider a $\mathcal F_T$-measurable random variable $h\in L^2(\Omega,\mathcal F,\mathbb P^*)$. There exists an admissible strategy $\varphi := (H^0(t),H(t))_{t\in [0,T]}$ such that
\begin{eqnarray*}
 V(\varphi,t) & := &
 H^0(t)S^0(t) + H(t)S(t)\\
 & = &
 \mathbb E^*(e^{-r(T - t)}h|\mathcal F_t)
\end{eqnarray*}
for every $t\in [0,T]$.
\end{proposition}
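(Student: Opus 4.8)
The plan is to run the classical replication argument for a complete market, relying on the uniqueness of $\mathbb P^*$ (Proposition \ref{completeness}) and on the fact that $W^*$ inherits a martingale representation property. First I would introduce the discounted-payoff martingale $M := (M(t))_{t\in[0,T]}$ defined by $M(t) := \mathbb E^*(h/S^0(T)\,|\,\mathcal F_t)$. Since $S^0(T) = S^0_0 e^{rT}$ is a deterministic constant and $h\in L^2(\Omega,\mathcal F,\mathbb P^*)$, $M$ is a square-integrable $\mathbb P^*$-martingale adapted to $\mathbb F$. Because $B(t) =\int_0^t K_H(t,s)dW(s)$ is an $\mathbb F^W$-adapted functional of $W$, the filtration $\mathbb F$ coincides with the $\mathbb P$-augmentation of the natural filtration of $W$, which enjoys the predictable representation property with respect to $W$ under $\mathbb P$; by Girsanov's theorem this property carries over to $W^*$ under $\mathbb P^*$. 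Hence there is a $d$-dimensional $\mathbb F$-adapted process $\phi$ with $\mathbb E^*(\int_0^T\|\phi(s)\|^2 ds) <\infty$ such that $M(t) = M(0) +\int_0^t\langle\phi(s),dW^*(s)\rangle$ for all $t\in[0,T]$.

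Next I would convert this integral into one against the discounted price process. From the proof of Proposition \ref{no_arbitrage}, $\widetilde S(t) =\widetilde S(0) +\int_0^t f(\widetilde S(u),U(u))dW^*(u)$, and since $f(x,y) =\sum_k x^{(k)}\langle h_k,y\rangle e_{k,k}$ is diagonal this reads $d\widetilde S(u)^{(k)} =\widetilde S(u)^{(k)}V(u)^{(k)}dW^*(u)^{(k)}$ for each $k\in\llbracket 1,d\rrbracket$. By (\ref{Black_Scholes_solution}) one has $\widetilde S(u)^{(k)} > 0$, and by Assumption \ref{viability_assumption_2} (together with $\xi > 0$) one has $V(u)^{(k)}\geqslant\xi > 0$; so I can set $H(t)^{(k)} :=\phi(t)^{(k)}/(\widetilde S(t)^{(k)}V(t)^{(k)})$, which is $\mathbb F$-adapted and satisfies $\int_0^T (H(s)^{(k)})^2(\widetilde S(s)^{(k)}V(s)^{(k)})^2 ds =\int_0^T(\phi(s)^{(k)})^2 ds <\infty$ a.s., so that $\int_0^\cdot\langle H(s),d\widetilde S(s)\rangle =\int_0^\cdot\langle\phi(s),dW^*(s)\rangle = M -M(0)$ is a true $\mathbb P^*$-martingale. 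Then I would define $H^0(t) := M(t) -\langle H(t),\widetilde S(t)\rangle$, so that the discounted value process of $\varphi :=(H^0,H)$, namely $\widetilde V(\varphi,t) := H^0(t) +\langle H(t),\widetilde S(t)\rangle$, equals $M(t)$. A routine It\^o computation then shows that $\varphi$ is self-financing (equivalently, $\widetilde V(\varphi,\cdot) =\widetilde V(\varphi,0) +\int_0^\cdot\langle H(s),d\widetilde S(s)\rangle$), and since $\widetilde V(\varphi,\cdot) = M$ is a true $\mathbb P^*$-martingale, $\varphi$ is admissible in the sense of Bj\"ork \cite{BJORK09}.

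Finally, undiscounting: for every $t\in[0,T]$, $V(\varphi,t) = S^0(t)\widetilde V(\varphi,t) = S^0(t)M(t) =\mathbb E^*((S^0(t)/S^0(T))h\,|\,\mathcal F_t) =\mathbb E^*(e^{-r(T - t)}h\,|\,\mathcal F_t)$, and in particular $V(\varphi,T) = S^0(T)M(T) = h$, which is the asserted pricing formula. I expect the only delicate point to be the martingale representation step: one must argue that $B$, being an adapted functional of $W$ through the Decreusefond-Ust\"unel transformation, adds no information to $\mathbb F^W$, and then invoke the stability of the predictable representation property under the equivalent change of measure $\mathbb P\to\mathbb P^*$ furnished by Girsanov; once $\phi$ is in hand, the remaining construction of $(H^0,H)$ and the verification of self-financing and admissibility are the standard bookkeeping.
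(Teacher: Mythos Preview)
Your proposal is correct and follows essentially the same approach as the paper: the paper's proof is a one-line reference to the classical Black-Scholes replication argument via the martingale representation theorem (Revuz--Yor, Theorem V.3.9), justified by the fact that $W$ and $B$ generate the same filtration, and you have simply written out that argument in full detail. The key point---that the Decreusefond--Ust\"unel representation forces $\mathbb F^B =\mathbb F^W$ so that predictable representation with respect to $W^*$ is available---is exactly what the paper isolates as the crucial step.
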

%


%
\begin{proof}
The proof is the same than for the classic Black-Scholes model. Indeed, since the processes $W$ and $B$ generate the same filtration, the key argument of the proof using the martingale representation theorem (Revuz and Yor \cite{RY99}, Theorem V.3.9) holds true.
\end{proof}
%


%
\section{Numerical simulations}
The purpose of this section is to provide some simulations of the fractional stochastic volatility model studied in this paper when $H > 1/2$.
\\
\\
The fractional Brownian motion $B$ is simulated via Wood-Chan's method (see Coeurjolly \cite{COEURJOLLY00}, Section 3.6). The solution of Equation (\ref{volatility_equation}) is approximated by the (explicit) Euler scheme (see Lejay \cite{LEJAY10}, Section 5).
\\
\\
Assume that $d := 2$, $h_1 := (1,1)$, $h_2 := (1,0)$, $i_1 = i_2 = 1$, and
\begin{displaymath}
\left.
\frac{d\mathbb P_{\xi}}{dx}\right|_{\mathcal F_0} =
\lambda\exp\left(-\frac{1}{x^3}\right)\mathbf 1_{]0,1]}(x)
\end{displaymath}
with
\begin{displaymath}
\lambda :=
\left(\int_{0}^{1}\exp\left(-\frac{1}{x^3}\right)dx\right)^{-1}
\approx 15.7604.
\end{displaymath}
So, for every $\omega\in\Omega$,
\begin{displaymath}
K(\omega) =
\{(x,y)\in\mathbb R^2 :
\xi(\omega) - y\leqslant x
\textrm{ and }
\xi(\omega)\leqslant x\}.
\end{displaymath}
Consider the maps $\mu :\Omega\times\mathbb R^2\rightarrow\mathbb R^2$ and $\sigma :\Omega\times\mathbb R^2\rightarrow\mathcal M_2(\mathbb R)$ defined by
\begin{displaymath}
\mu(\omega,(x,y)) :=
xe_1 + (y -\xi(\omega))e_2
\end{displaymath}
and
\begin{displaymath}
\sigma(\omega,(x,y)) :=
(x -\xi(\omega))
\begin{pmatrix}
 1 & 1\\
 0 & -1
\end{pmatrix}
\end{displaymath}
for every $\omega\in\Omega$ and $(x,y)\in\mathbb R^2$. These maps satisfy assumptions \ref{existence_uniqueness_assumption}, \ref{integrability_assumption}, \ref{measurability_assumption} and \ref{viability_assumption_2}.
\\
\\
According with Section 2, the volatility of the assets is modeled by the process $V := (V(t))_{t\in [0,T]}$ such that
\begin{displaymath}
V(t) := (U(t)^{(1)} + U(t)^{(2)},U(t)^{(1)})
\textrm{ $;$ }
\forall t\in [0,T]
\end{displaymath}
where, the process $U := (U(t))_{t\in [0,T]}$ is the solution of the pathwise differential equation
\begin{displaymath}
U(\omega,t) = (1,0) +
\int_{0}^{t}\mu(\omega,U(\omega,s))ds +
\int_{0}^{t}\sigma(\omega,U(\omega,s))\textrm{ $\circ$}dB(\omega,s),
\end{displaymath}
and the process $S := (S(t))_{t\in [0,T]}$ of the prices of the risky assets is the solution of the stochastic differential equations
\begin{eqnarray*}
 S(t)^{(1)} & = & 1 +
 \int_{0}^{t}S(u)^{(1)}du +\int_{0}^{t}S(u)^{(1)}(U(u)^{(1)} + U(u)^{(2)})dW(u)^{(1)},\\
 S(t)^{(2)} & = & 1 +
 \int_{0}^{t}S(u)^{(2)}du +\int_{0}^{t}S(u)^{(2)}U(u)^{(1)}dW(u)^{(2)}.
\end{eqnarray*}
With $H := 0.7$, some paths of $U$, $V$, $S^{(1)}$ and $S^{(2)}$ are plotted on the following figures. Note the viability of $U$ (resp. $V$) in $K$ (resp. $[\xi,\infty[^2$).
\begin{figure}[htbp]
\begin{center}
\includegraphics[scale = 0.50]{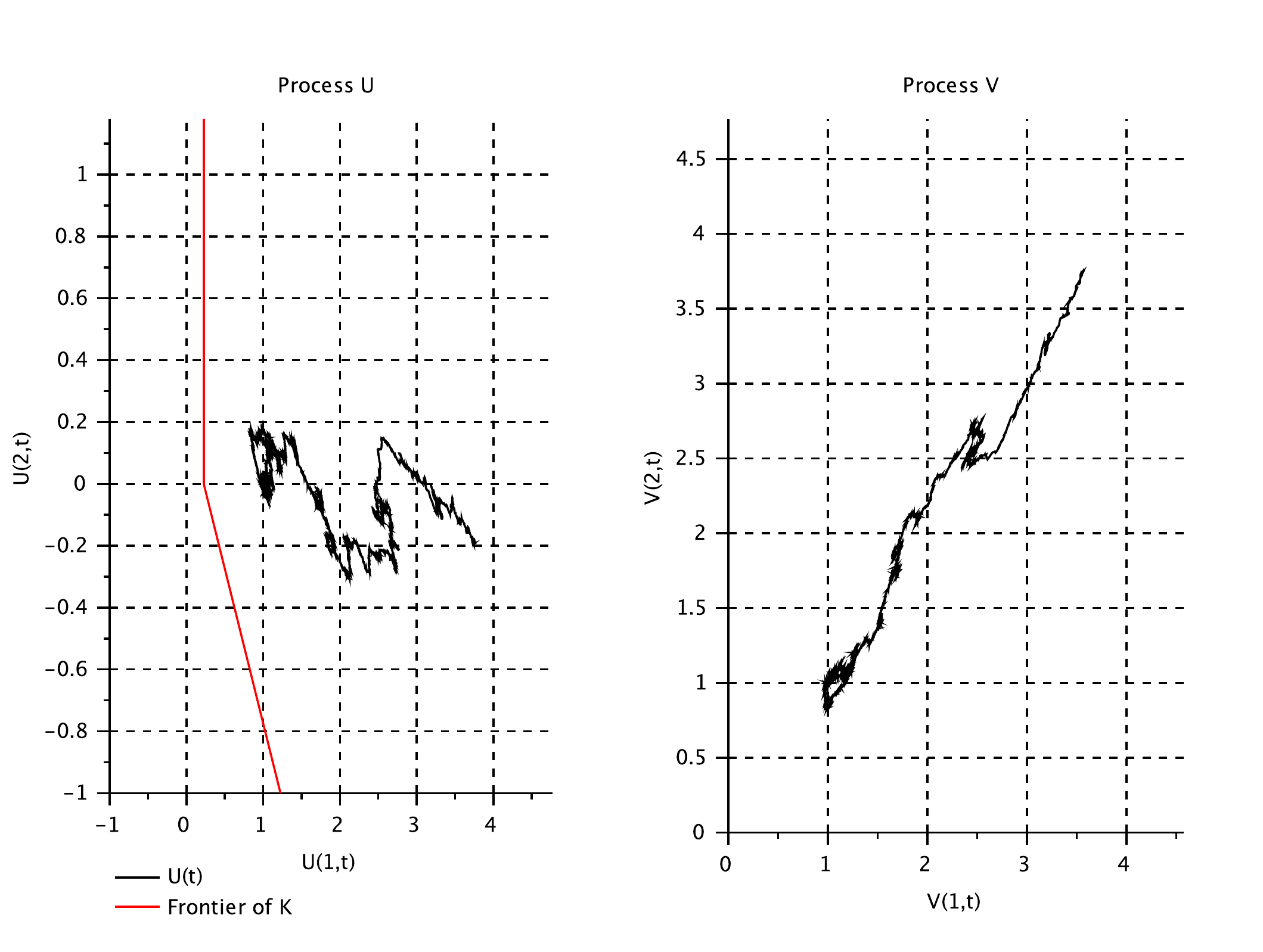}
\end{center}
\caption{Volatility process}
\label{fig:image1}
\end{figure}
\begin{figure}[htbp]
\begin{center}
\includegraphics[scale = 0.50]{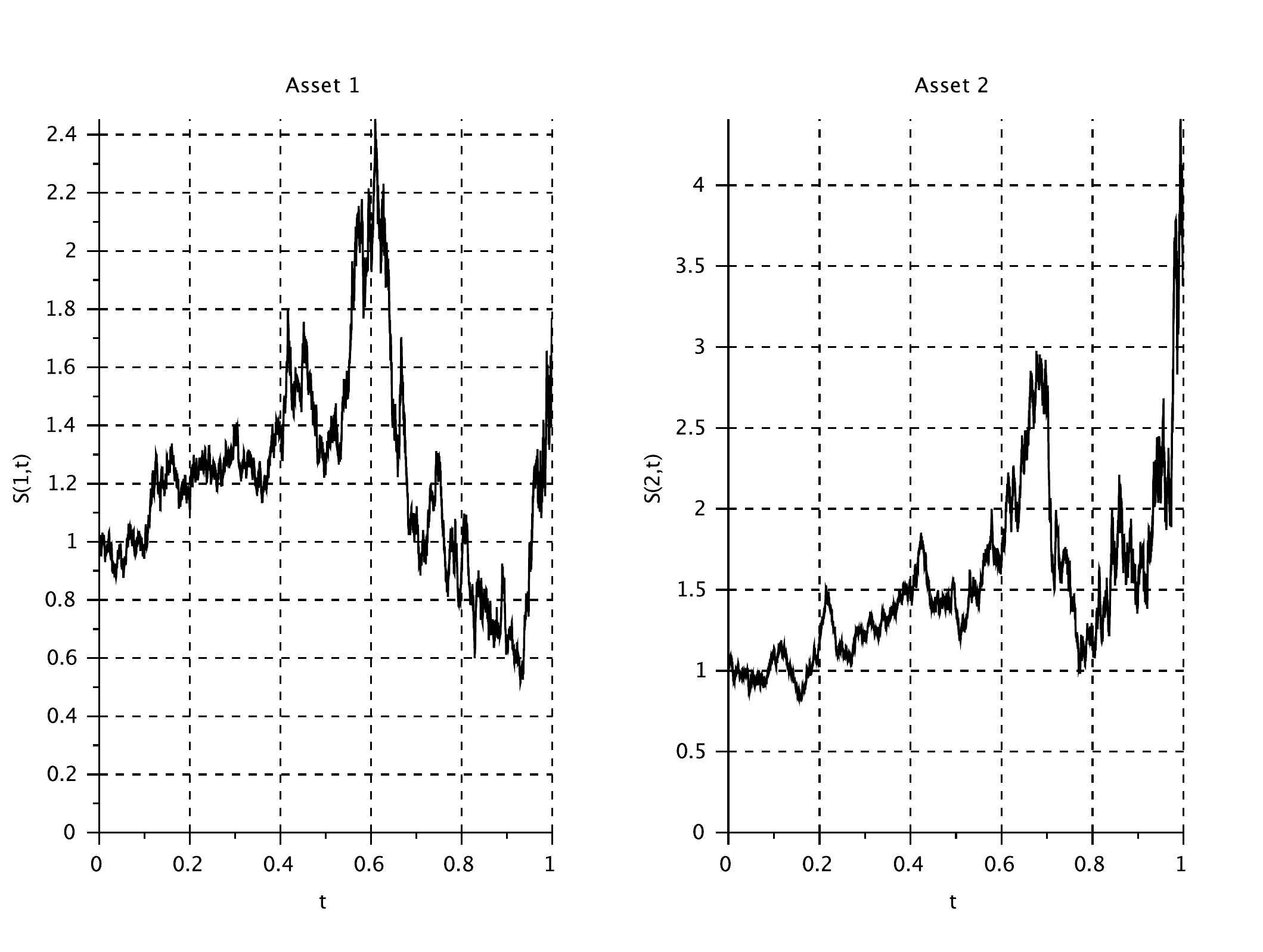}
\end{center}
\caption{Prices process}
\label{fig:image1}
\end{figure}
\newpage
\appendix
%


%
\section{The fractional Brownian motion}
This subsection presents the fractional Brownian motion and some results of the seminal paper of Decreusefond and Ust\"unel \cite{DU99}.
%


%
\begin{definition}\label{fbm}
Let $B := (B(t))_{t\in [0,T]}$ be a centered Gaussian process. It is a fractional Brownian motion if and only if there exists $H\in ]0,1[$, called Hurst parameter of $B$, such \mbox{that :}
\begin{displaymath}
\normalfont{\textrm{cov}}(B(s),B(t)) =
\frac{1}{2}(|s|^{2H} +
|t|^{2H} -
|t - s|^{2H})
\end{displaymath}
for every $(s,t)\in [0,T]^2$.
\end{definition}
%


%
\begin{proposition}\label{regularity_fbm}
Let $B$ be a fractional Brownian motion of Hurst parameter $H\in ]0,1[$. The paths of $B$ are continuous and of finite $p$-variation with $p\in ]1/H,\infty[$.
\end{proposition}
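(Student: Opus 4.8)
The plan is to combine Kolmogorov's continuity criterion (for the continuity and Hölder regularity of the paths) with the elementary implication ``Hölder continuous $\Longrightarrow$ finite $p$-variation'' (for the variation statement).

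First I would extract from the covariance function of Definition \ref{fbm} that, for $(s,t)\in [0,T]^2$, the increment $B(t)-B(s)$ is a centered Gaussian variable with variance $\mathbb E((B(t)-B(s))^2) = |t-s|^{2H}$, so that for every $m\in\mathbb N^*$,
\begin{displaymath}
\mathbb E(|B(t)-B(s)|^{2m}) = c_m|t-s|^{2mH}
\quad\textrm{with}\quad
c_m :=\mathbb E(|\mathcal N(0,1)|^{2m}) <\infty.
\end{displaymath}
For $m$ large enough one has $2mH - 1 > 0$, so Kolmogorov's continuity criterion applies and produces a modification of $B$ — still denoted $B$ — whose paths are continuous and, for every $\alpha\in ]0,H[$ (all such exponents being reached since the criterion gives Hölder regularity up to the exponent $H - 1/(2m)$ and $m\to\infty$), $\alpha$-Hölder continuous on $[0,T]$: almost surely there is a random constant $C_\alpha(\omega) > 0$ such that $|B(t)-B(s)|\leqslant C_\alpha(\omega)|t-s|^\alpha$ for every $(s,t)\in [0,T]^2$. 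In particular the paths of $B$ are continuous.

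Next, for the finite $p$-variation, I would fix $p\in ]1/H,\infty[$, choose $\alpha\in ]1/p,H[$ (so that $\alpha p\geqslant 1$), and work on the full-measure event on which the path of $B$ is $\alpha$-Hölder with constant $C_\alpha$. For $s < t$ in $[0,T]$ and any dissection $(t_k)_{k\in\llbracket 1,n\rrbracket}\in\mathfrak D_{[s,t]}$, the estimate
\begin{displaymath}
\sum_{k = 1}^{n - 1}|B(t_{k+1})-B(t_k)|^p
\leqslant
C_\alpha^p\sum_{k = 1}^{n - 1}(t_{k+1}-t_k)^{\alpha p}
\leqslant
C_\alpha^p T^{\alpha p - 1}\sum_{k = 1}^{n - 1}(t_{k+1}-t_k)
\leqslant
C_\alpha^p T^{\alpha p}
\end{displaymath}
follows from $(t_{k+1}-t_k)^{\alpha p - 1}\leqslant T^{\alpha p - 1}$ (valid since $\alpha p - 1\geqslant 0$) together with $\sum_k (t_{k+1}-t_k) = t - s\leqslant T$. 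Taking the supremum over all dissections yields $\|B\|_{p\textrm{-var},s,t}\leqslant C_\alpha T^\alpha <\infty$, hence $B\in C^{p\textrm{-var}}([s,t],\mathbb R)$ almost surely, for every $p\in ]1/H,\infty[$.

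I do not expect a genuine obstacle. The only points that require a little care are choosing the Kolmogorov exponent $2m$ large enough that the attainable Hölder exponent exceeds $1/p$, and recording that the finite $p$-variation is obtained \emph{pathwise}, with a random Hölder constant, rather than in $L^q(\Omega,\mathcal F,\mathbb P)$. One could instead estimate $\mathbb E(\|B\|_{p\textrm{-var},0,T}^q)$ directly (via a Besov-type or Garsia--Rodemich--Rumsey argument), but the route through Hölder regularity is shorter.
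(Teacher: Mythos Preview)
Your argument is correct and is precisely the standard route (Kolmogorov's criterion for $\alpha$-H\"older regularity with any $\alpha<H$, then the elementary bound giving finite $p$-variation for $p>1/H$). The paper does not give its own proof here but simply refers to Nualart \cite{NUALART06}, Section 5.1, where exactly this argument appears; so your proposal matches the intended reference.
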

\noindent
See Nualart \cite{NUALART06}, Section 5.1.
\\
\\
Consider $H\in ]0,1[$, the Gauss hypergeometric function $\mathfrak F$, and the map $K_H : [0,T]^2\rightarrow\mathbb R$ defined by
\begin{displaymath}
K_H(t,s) :=
\frac{(t - s)^{H - 1/2}}{\Gamma(H + 1/2)}
\mathfrak F\left(
\frac{1}{2} - H,H -\frac{1}{2},H +\frac{1}{2},1 -\frac{t}{s}\right)
\mathbf 1_{[0,t[}(s)
\end{displaymath}
for every $(s,t)\in [0,T]^2$ such that $s < t$.
\\
\\
Let $W := (W(t))_{t\in [0,T]}$ be a Brownian motion, and consider the canonical probability space $(\Omega,\mathcal F,\mathbb P)$ for $W$. Consider also the process $B := (B(t))_{t\in [0,T]}$ defined by
\begin{displaymath}
B(t) :=
\int_{0}^{t}K_H(t,s)dW(s)
\end{displaymath}
for every $t\in [0,T]$.
%


%
\begin{theorem}\label{DU_representation}
The process $B$ is a fractional Brownian motion of Hurst parameter $H$ generating the same filtration than $W$.
\end{theorem}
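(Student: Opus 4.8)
The plan is to verify the two assertions of Theorem \ref{DU_representation} separately: first that $B$ is a fractional Brownian motion of Hurst parameter $H$, and then that the filtration it generates coincides with that of $W$. For the first part, since $B$ is defined as a Wiener integral of the deterministic kernel $K_H(t,\cdot)$ against $W$, it is automatically a centered Gaussian process; the only thing to check is that its covariance function matches the one in Definition \ref{fbm}. By the It\^o isometry for Wiener integrals,
\begin{displaymath}
\textrm{cov}(B(s),B(t)) =
\int_{0}^{s\wedge t} K_H(t,u)K_H(s,u)\,du,
\end{displaymath}
so the task reduces to the analytic identity that this integral equals $\tfrac12(s^{2H}+t^{2H}-|t-s|^{2H})$. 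I would not grind through this: it is the classical computation underlying the Mandelbrot--Van Ness / Decreusefond--Ust\"unel representation, and I would cite Decreusefond and Ust\"unel \cite{DU99} (or Nualart \cite{NUALART06}, Section 5.1) for the evaluation of the hypergeometric integral, noting that the normalising constant $\Gamma(H+1/2)$ and the arguments of $\mathfrak F$ in the definition of $K_H$ are chosen precisely so that this identity holds.

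For the filtration statement, write $\mathbb F^W = (\mathcal F_t^W)_{t\in[0,T]}$ and $\mathbb F^B = (\mathcal F_t^B)_{t\in[0,T]}$ for the (augmented) natural filtrations of $W$ and $B$. The inclusion $\mathcal F_t^B \subset \mathcal F_t^W$ for each $t$ is immediate from the defining formula, since $B(u) = \int_0^u K_H(u,s)\,dW(s)$ is $\mathcal F_u^W$-measurable for every $u \leqslant t$. The reverse inclusion $\mathcal F_t^W \subset \mathcal F_t^B$ is the substance of the theorem: one must show that $W$ can be recovered from $B$ by an adapted transformation. The key point is that the kernel $K_H$ is a (deterministic) Volterra kernel which is invertible in an appropriate triangular sense; there is an explicit inverse kernel $K_H^{-1}$, built from fractional integration/differentiation operators, such that
\begin{displaymath}
W(t) = \int_{0}^{t} K_H^{-1}(t,s)\,dB(s)
\end{displaymath}
in a suitable sense, and this representation is again adapted, giving $\mathcal F_t^W \subset \mathcal F_t^B$. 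I would invoke the inversion formula established in Decreusefond and Ust\"unel \cite{DU99} rather than rederiving it.

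The main obstacle is precisely this inversion/adaptedness argument: it is not enough to know that $B$ and $W$ generate the same $\sigma$-algebra globally ($\mathcal F_T^B = \mathcal F_T^W$), one needs the equality \emph{filtration by filtration}, $\mathcal F_t^B = \mathcal F_t^W$ for all $t\in[0,T]$, and this hinges on the Volterra (lower-triangular) structure of $K_H$ and of its inverse — a fact that is genuinely special to this kernel and fails for a generic square-integrable kernel with the right covariance. Since this is exactly what \cite{DU99} proves, the cleanest route is to state the result as a direct consequence of their transformation, with the covariance computation relegated to a one-line citation and the filtration equality to the Volterra-inversion theorem therein.
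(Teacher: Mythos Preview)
Your proposal is correct and in fact more detailed than the paper's own treatment: the paper gives no argument at all for Theorem \ref{DU_representation} and simply writes ``See Decreusefond and Ust\"unel \cite{DU99}, Corollary 3.1.'' Since your sketch ultimately defers the covariance computation and the Volterra inversion to the same reference, the two approaches coincide; your outline of \emph{why} the citation suffices (Gaussianity plus It\^o isometry for the covariance, and the adapted invertibility of the Volterra kernel for the filtration equality) is a welcome expansion but not required by the paper.
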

\noindent
See Decreusefond and Ust\"unel \cite{DU99}, Corollary 3.1.
%


%
\section{Differential equations driven by a fractional Brownian motion}
This section deals with some basics on pathwise differential equations driven by a fractional Brownian motion of Hurst parameter greater than $1/4$. At the end of this section, some viability results proved in Coutin and Marie \cite{CM16} are stated.
\\
\\
Consider $(\Omega,\mathcal F,\mathbb P)$ a probability space, $\textrm B := (\textrm B(t))_{t\in [0,T]}$ a fractional Brownian motion of Hurst parameter $H\in ]1/4,1[$, and $B := (B_1,\dots,B_e)$ where, $B_1,\dots,B_e$ are $e\in\mathbb N^*$ independent copies of $\textrm B$.
\\
\\
Consider the differential equation
\begin{equation}\label{main_equation_appendix}
X(\omega,t) =
u_0 +\int_{0}^{t}\mu(\omega,X(\omega,s))ds +
\int_{0}^{t}\sigma(\omega,X(\omega,s))\textrm{ $\circ$}dB(\omega,s)
\end{equation}
where, $\mu$ (resp. $\sigma$) is a map from $\Omega\times\mathbb R^d$ into $\mathbb R^d$ (resp. $\mathcal M_{d,e}(\mathbb R)$).
%


%
\begin{definition}\label{solution_differential_equation}
In the sense of rough paths, a process $X := (X(t))_{t\in [0,T]}$ is a solution on $[0,T]$ of Equation (\ref{main_equation_appendix}) if and only if
\begin{displaymath}
\lim_{n\rightarrow\infty}
\|X_n - X\|_{\infty,T} = 0
\end{displaymath}
where, for every $n\in\mathbb N^*$, $X_n$ is a solution on $[0,T]$ of the ordinary differential equation
\begin{displaymath}
X_n(\omega,t) =
u_0 +\int_{0}^{t}\mu(\omega,X_n(\omega,s))ds +
\int_{0}^{t}\sigma(\omega,X_n(\omega,s))\textrm{ $\circ$}dB_n(\omega,s)
\end{displaymath}
and $(B_n)_{n\in\mathbb N^*}$ is sequence of linear approximations of $B$ such that
\begin{displaymath}
\lim_{n\rightarrow\infty}
\|B_n - B\|_{p\normalfont{\textrm{-var}},T} = 0
\end{displaymath}
with $p > 1/H$.
\end{definition}
%


%
\begin{assumption}\label{existence_uniqueness_assumption}
For every $\omega\in\Omega$,
\begin{enumerate}
 \item $\mu(\omega,.)\in C^{[1/H] + 1}(\mathbb R^d,\mathbb R^d)$ and $\sigma(\omega,.)\in C^{[1/H] + 1}(\mathbb R^d,\mathcal M_{d,e}(\mathbb R))$.
 \item For every $k\in\llbracket 1,d\rrbracket$, the maps $D^k\mu(\omega,.)$ and $D^k\sigma(\omega,.)$ are bounded.
 \item $\mu(\omega,.)$ (resp. $\sigma(\omega,.)$) is Lipschitz continuous from $\mathbb R^d$ into itself (resp. $\mathcal M_{d,e}(\mathbb R)$).
\end{enumerate}
\end{assumption}
\noindent
\textbf{Notation.} Under Assumption \ref{existence_uniqueness_assumption}, for every $\omega\in\Omega$,
\begin{displaymath}
M(\mu,\sigma)(\omega) :=
\max_{k\in\llbracket 1,[1/H] + 1\rrbracket}
\|D^k(\mu(\omega,.),\sigma(\omega,.))\|_{\infty,\mathcal L((\mathbb R^d)^{\otimes k},\mathbb R^d\oplus\mathcal M_{d,e}(\mathbb R))}.
\end{displaymath}
%


%
\begin{assumption}\label{integrability_assumption}
For every $x\in\mathbb R^d$, $\mu(.,x)$ and $\sigma(.,x)$ are $\mathcal F$-measurable. Moreover, $M(\mu,\sigma)$ is bounded by a deterministic constant.
\end{assumption}
%


%
\begin{theorem}\label{existence_uniqueness_Young}
Under Assumption \ref{existence_uniqueness_assumption}, Equation (\ref{main_equation_appendix}) has a unique solution $X$ on $[0,T]$. Moreover, $X$ has continuous paths of finite $p$-variation with $p > 1/H$, and there exists a deterministic constant $C > 0$ such that
\begin{equation}\label{control_solution_Young}
\|X\|_{\infty,T}
\leqslant
C\exp(CM(\mu,\sigma)(T^p + M_p(B)))
\end{equation}
where,
\begin{eqnarray*}
 M_p(B) & := &
 \sup\{
 \sum_{k = 1}^{n - 1}\|B\|_{p\normalfont{\textrm{-var}},t_k,t_{k + 1}}^{p}
 \textrm{ $;$ }
 n\in\mathbb N^*
 \textrm{$,$ }
 (t_k)_{k\in\llbracket 1,n\rrbracket}
 \in\mathfrak D_{[0,T]}\\
 & &
 \textrm{ and }
 \forall k\in\llbracket 1,n - 1\rrbracket
 \textrm{$,$ }
 \|B\|_{p\normalfont{\textrm{-var}},t_k,t_{k + 1}}
 \leqslant 1\}.
\end{eqnarray*}
If $\mu$ and $\sigma$ satisfy Assumption \ref{integrability_assumption}, then
\begin{equation}\label{integrability_Young}
\mathbb E(\|X\|_{\infty,T}^{q}) <\infty
\textrm{ $;$ }
\forall q\in [1,\infty[.
\end{equation}
\end{theorem}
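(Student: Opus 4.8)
The plan is to identify the solution described by Definition~\ref{solution_differential_equation} with the solution, in the rough path sense, of the rough differential equation driven by the canonical geometric rough path lift of $B$, and then to run the standard machinery of Friz and Victoir \cite{FV10}; this is exactly the setting of Coutin and Marie \cite{CM16}. Throughout I would fix $p>1/H$ with $p\notin\mathbb N$ and $p<[1/H]+1$, which is possible since $[1/H]+1>1/H$.

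\textbf{Existence, uniqueness and $p$-variation regularity.} Since $H>1/4$, the scalar fractional Brownian motion, and hence $B=(B_1,\dots,B_e)$, admits a canonical geometric $p$-rough path lift $\mathbf B$, obtained as the limit in the $p$-variation rough path metric of the canonical lifts of the dyadic piecewise linear interpolations $B_n$ of $B$ (see \cite{FV10}, Chapter~15); these $B_n$ form an admissible approximating sequence in the sense of Definition~\ref{solution_differential_equation}. By Assumption~\ref{existence_uniqueness_assumption}, for each $\omega$ the coefficients $\mu(\omega,\cdot)$ and $\sigma(\omega,\cdot)$ belong to $C^{[1/H]+1}$ with bounded derivatives up to order $[1/H]+1$ and are globally Lipschitz, hence of linear growth. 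I would first truncate them to bounded $\mathrm{Lip}(\gamma)$ vector fields with $\gamma\in\,]p,[1/H]+1[\,$, apply the universal limit theorem of \cite{FV10} to obtain a unique solution of \eqref{main_equation_appendix} driven by $\mathbf B(\omega)$ --- with continuous paths of finite $p$-variation, and depending continuously on the driving rough path --- and then remove the truncation by means of the a priori bound below, which precludes explosion. Continuity of the It\^o--Lyons map together with $B_n\to B$ in $p$-variation then gives $X_n\to X$ uniformly on $[0,T]$, so $X$ is also the solution of Definition~\ref{solution_differential_equation}, and uniqueness is inherited from that of the limit.

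\textbf{The pathwise estimate.} I expect the quantitative non-explosion bound \eqref{control_solution_Young} to be the main obstacle. The strategy is a greedy partition of $[0,T]$ adapted to the $p$-variation of the driving signal: set $\tau_0=0$ and $\tau_{j+1}=\inf\{t>\tau_j:\|B\|_{p\textrm{-var};\tau_j,t}\geqslant 1\}\wedge T$, producing $N$ sub-intervals on each of which (hence, by homogeneity, also for the lift $\mathbf B$) the local $p$-variation is controlled. On such an interval, Davie's local growth estimate for rough differential equations --- in the linear-growth version of the estimates in \cite{FV10} --- gives a multiplicative bound $\|X(\tau_{j+1})\|\leqslant\Lambda\,(1+\|X(\tau_j)\|)$, with $\Lambda$ depending only on $M(\mu,\sigma)$ and the (bounded) local variation, the drift contributing the time length; iterating over the $N$ sub-intervals yields $\|X\|_{\infty,T}\leqslant C(1+\|u_0\|)\Lambda^{N}$. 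Finally, super-additivity of the control $(s,t)\mapsto\|B\|_{p\textrm{-var};s,t}^{p}$, together with the analogous bookkeeping for the Lebesgue term, bounds $N$ by a constant multiple of $T^p+M_p(B)$, with $M_p(B)$ precisely the quantity appearing in the statement; collecting these estimates produces a bound of the form \eqref{control_solution_Young}. The delicate points are analytic and combinatorial rather than conceptual: propagating the correct dependence on $M(\mu,\sigma)$ through Davie's estimate, controlling the lift $\mathbf B$ on the greedy sub-intervals by the $p$-variation of $B$ alone, and matching the greedy count to $M_p(B)$ exactly.

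\textbf{Integrability.} Under Assumption~\ref{integrability_assumption} one has $M(\mu,\sigma)\leqslant\kappa$ for a deterministic constant $\kappa$, so \eqref{control_solution_Young} gives $\|X\|_{\infty,T}\leqslant Ce^{C\kappa T^p}e^{C\kappa M_p(B)}$, and it remains to prove that $\mathbb E[\exp(qM_p(B))]<\infty$ for every $q>0$. The very reason the bound is phrased in terms of $M_p(B)$ rather than $\|\mathbf B\|_{p\textrm{-var},T}$ --- whose tails, governed by iterated integrals and hence by higher order Wiener chaos, become too heavy as soon as $H\leqslant 1/3$ --- is that $M_p(B)$ is a greedy-partition count of Cass--Litterer--Lyons type: the covariance of the fractional Brownian motion has finite $\rho$-variation with $\rho=\max(1,1/(2H))<2$ because $H>1/4$, and for Gaussian signals with $\rho<2$ such a count satisfies a Weibull-type tail estimate with exponent $2/\rho>1$, hence has exponential moments of all orders. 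Consequently $\exp(qM_p(B))\in L^1$ for every $q>0$, and \eqref{control_solution_Young} then gives $\|X\|_{\infty,T}\in L^q$ for all $q\in[1,\infty[$, which is \eqref{integrability_Young}.
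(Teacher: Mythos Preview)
Your proposal is correct and follows precisely the route the paper takes: the paper does not give a self-contained proof but simply cites Friz and Victoir \cite{FV10}, Theorem~10.26 and Exercises~10.55--10.56 for existence, uniqueness and the bound \eqref{control_solution_Young}, and Cass, Litterer and Lyons \cite{CLL13} for the exponential integrability of $M_p(B)$ yielding \eqref{integrability_Young}. Your sketch accurately unpacks what lies behind those references (rough path lift, greedy partition / accumulated $p$-variation, Weibull tails for Gaussian signals with $\rho$-variation covariance), so there is nothing to add.
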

\noindent
\textbf{Remark.} At Theorem \ref{existence_uniqueness_Young}, the existence and the uniqueness of the solution of Equation (\ref{main_equation_appendix}), and the bound provided by Equation (\ref{control_solution_Young}), are given by Friz and Victoir \cite{FV10}, Theorem 10.26, Exercice 10.55 and Exercice 10.56. Equation (\ref{integrability_Young}) of Theorem \ref{existence_uniqueness_Young} is a straightforward consequence of Assumption \ref{integrability_assumption}, Equation (\ref{control_solution_Young}) and Cass, Lyons and Litterer \cite{CLL13}.
\\
\\
The following viability results are proved in Coutin and Marie \cite{CM16}. Let $K\subset\mathbb R^d$ be a closed convex set.
%


%
\begin{definition}\label{viability}
A function $\varphi : [0,T]\rightarrow\mathbb R^d$ is viable in $K$ if and only if,
\begin{displaymath}
\varphi(t)\in K
\textrm{ $;$ }
\forall t\in [0,T].
\end{displaymath}
\end{definition}
\noindent
\textbf{Notation.} For every $x\in K$,
\begin{displaymath}
N_K(x) :=
\{s\in\mathbb R^d :
\forall y\in K
\textrm{$,$ }
\langle s,y - x\rangle\leqslant 0\}
\end{displaymath}
is the normal cone to $K$ at $x$.
\begin{assumption}\label{viability_assumption}
For every $\omega\in\Omega$, $x\in\partial K$ and $s\in N_K(x)$,
\begin{displaymath}
\langle s,b(\omega,x)\rangle\leqslant 0
\end{displaymath}
and
\begin{displaymath}
\langle s,\sigma_{.,k}(\omega,x)\rangle\leqslant 0
\textrm{ $;$ }
\forall k\in\llbracket 1,e\rrbracket.
\end{displaymath}
\end{assumption}
%


%
\begin{theorem}\label{viability_theorem}
Under assumptions \ref{existence_uniqueness_assumption} and \ref{viability_assumption}, $X(\omega)$ is viable in $K$ for every $\omega\in\Omega$.
\end{theorem}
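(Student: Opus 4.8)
The plan is to argue pathwise. Fix $\omega\in\Omega$ once and for all, so that $K$ is a fixed closed convex subset of $\mathbb R^d$ and the statement reduces to a deterministic viability assertion for a rough differential equation. By Definition \ref{solution_differential_equation}, $X(\omega,\cdot)$ is the uniform limit on $[0,T]$ of the solutions $X_n(\omega,\cdot)$ of the ordinary differential equations driven by the linear approximations $B_n(\omega,\cdot)$ of $B(\omega,\cdot)$. Since $K$ is closed and $X_n(\omega,t)\to X(\omega,t)$ for every $t\in[0,T]$, it suffices to prove that each $X_n(\omega,\cdot)$ is viable in $K$, and then to pass to the limit.

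For a fixed $n$, $X_n(\omega,\cdot)$ is an absolutely continuous solution of the non-autonomous ordinary differential equation $\dot y(t)=\mu(\omega,y(t))+\sigma(\omega,y(t))\dot B_n(\omega,t)$, where $\dot B_n(\omega,\cdot)$ is piecewise constant (each $B_n$ being piecewise linear along a dissection). I would then invoke the classical Nagumo--Brezis viability theorem for a closed convex set: $X_n(\omega,\cdot)$ stays in $K$ as soon as, for almost every $t$, the velocity field is subtangential to $K$ at $X_n(\omega,t)$, i.e.
\[
\langle s,\mu(\omega,y)\rangle+\langle s,\sigma(\omega,y)\dot B_n(\omega,t)\rangle\le 0
\]
for every $y\in\partial K$ and every $s\in N_K(y)$. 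The drift part $\langle s,\mu(\omega,y)\rangle$ is nonpositive by Assumption \ref{viability_assumption}. The diffusion part is a combination of the scalars $\langle s,\sigma_{.,k}(\omega,y)\rangle$ with coefficients given by the coordinates of $\dot B_n(\omega,t)$, which are of arbitrary sign; so what is really used is that the columns of $\sigma$ are tangent to $\partial K$ along the relevant normals, which is precisely the content imposed for every $s$ in the normal cone, and, in the polyhedral situation $K=\bigcap_k\Pi(\cdot,-h_k)$ of Section 2, exactly what Assumptions \ref{viability_assumption_1} and \ref{viability_assumption_2} make explicit through the conditions $\langle h_k,\sigma_{.,j}\rangle=0$ on the active facets. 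With the subtangential condition verified on each cell of each dissection, the classical theorem yields $X_n(\omega,t)\in K$ for all $t$, and the limiting argument of the first paragraph concludes.

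I expect the diffusion term to be the main obstacle: the sign-indefiniteness of $\dot B_n$ prevents any one-sided inward-pointing condition from sufficing, so one must genuinely exploit the two-sided (tangency) part of Assumption \ref{viability_assumption} and check that it survives the refinement of the dissections uniformly in $n$; moreover, in the regime $H\in\,]1/4,1/2]$ the approximations converge only in the $p$-variation / geometric rough path sense, so the stability of viability under that convergence must be handled with the machinery of Coutin and Marie \cite{CM16}. An alternative, approximation-free route would be to estimate $t\mapsto d_K(X(\omega,t))^2$, where $d_K$ denotes the distance to $K$: apply a change-of-variables formula to this $C^{1,1}$ function along the rough path $X(\omega,\cdot)$, use that its gradient $x\mapsto x-\pi_K(x)$ (with $\pi_K$ the projection onto $K$) lies in $N_K(\pi_K(x))$, and then, combining Assumption \ref{viability_assumption} with the Lipschitz bounds of Assumption \ref{existence_uniqueness_assumption}, dominate the drift and diffusion contributions by a constant times $d_K(X(\omega,\cdot))^2$, so that Gronwall's lemma together with $d_K(X(\omega,0))=0$ forces $d_K(X(\omega,t))=0$ throughout. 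The price of that route is the justification of the change-of-variables formula for a merely $C^{1,1}$ function against a low-regularity rough path and the control of the associated area terms; in either approach the diffusion contribution is the crux.
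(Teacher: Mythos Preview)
The paper does not actually prove this theorem: immediately after the statement it simply writes ``See Coutin and Marie \cite{CM16}, Theorem 3.4.'' There is therefore nothing in the present paper to compare your argument against beyond that citation; the result is imported wholesale from \cite{CM16}.

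That said, your sketch is a reasonable reconstruction of the kind of argument one expects in the cited reference. The reduction to ODEs via Definition \ref{solution_differential_equation} and the passage to the limit through the closedness of $K$ are exactly the natural moves in the rough-path setting, and you have correctly singled out the real issue: the Nagumo--Brezis condition for the approximating ODE reads
\[
\langle s,\mu(\omega,y)\rangle+\sum_{k=1}^{e}\dot B_n^{(k)}(\omega,t)\,\langle s,\sigma_{.,k}(\omega,y)\rangle\le 0,
\]
and since the components $\dot B_n^{(k)}$ take both signs, one genuinely needs $\langle s,\sigma_{.,k}(\omega,y)\rangle=0$, not merely $\le 0$ as Assumption \ref{viability_assumption} is literally stated here. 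You are right that the polyhedral Assumptions \ref{viability_assumption_1} and \ref{viability_assumption_2} in the body of the paper impose this equality explicitly; the one-sided inequality in Assumption \ref{viability_assumption} appears to be a looseness in the appendix's summary of \cite{CM16} rather than a flaw in your reasoning. Your alternative $d_K^2$/Gronwall route is also standard in the viability literature and faces the same tangency requirement on $\sigma$, together with the additional technical cost you mention (It\^o-type formula for a $C^{1,1}$ function along a rough path).
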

\noindent
See Coutin and Marie \cite{CM16}, Theorem 3.4.
\\
\\
\textbf{Remark.} Since Theorem \ref{viability_theorem} is a pathwise result, the viability of the solution in $K$ holds true when that one is a random set.
%


%

%

\begin{thebibliography}{99}
 \bibitem{AUBIN90} J.P. Aubin. \textit{A Survey of Viability Theory.} SIAM J. Control and Optimization 28, 4, 749-788, 1990.
 \bibitem{ABS11} J.P. Aubin, A. Bayen and P. Saint-Pierre. \textit{Viability Theory : New Directions.} Springer, 2011.
 \bibitem{AD90} J.P. Aubin and G. DaPrato. \textit{Stochastic Viability and Invariance.} Annali Scuola Normale di Pisa 27, 595-694, 1990.
 \bibitem{BJORK09} T. Bj\"ork. \textit{Arbitrage Theory in Continuous Time.} Wiley, 2009.
 \bibitem{CHERIDITO01} P. Cheridito. \textit{Regularizing Fractional Brownian Motion with a View Towards Stock Price Modeling.} Th\`ese de doctorat de l'universit\'e de Z\"urich, 2001.
 \bibitem{CLL13} T. Cass, C. Litterer and T. Lyons. \textit{Integrability and Tail Estimates for Gaussian Rough Differential Equations.} The Annals of Probability 41, 4, 3026-3050, 2013.
 \bibitem{CR08} I. Ciotir and A. Rascanu. \textit{Viability for Differential Equations driven by Fractional Brownian Motion.} Journal of Differential Equations 247(5), 1505-1528, 2009.
 \bibitem{COEURJOLLY00} J.F. Coeurjolly. \textit{Simulation and Identification of the Fractional Brownian Motion : A Bibliographical and Comparative Study.} Journal of Statistical Software, doi: 18637/jss.v005.i07, 2000.
 \bibitem{CCR12} F. Comte, L. Coutin and E. Renault. \textit{Affine Fractional Stochastic Volatility Models.} Annals of Finance 8, 2-3, 337-378, 2012.
 \bibitem{CM16} L. Coutin and N. Marie. \textit{Viability for Rough Differential Equations.} Preprint, 2016.
 \bibitem{DU99} L. Decreusefond and A.S. Ust\"unel. \textit{Stochastic Analysis of the Fractional Brownian Motion.} Potential Analysis 10, 177-214, 1999.
 \bibitem{FV10} P. Friz and N. Victoir. \textit{Multidimensional Stochastic Processes as Rough Paths : Theory and Applications.} Cambridge Studies in Applied Mathematics 120, Cambridge University Press, 2010.
 \bibitem{GJR14} J. Gatheral, T. Jaisson and M. Rosenbaum. \textit{Volatility is Rough.} arXiv:1410.3394.
 \bibitem{HESTON93} S.L. Heston. \textit{A Closed-Form Solution for Options with Stochastic Volatility with Applications to Bond and Currency Options.} The Review of Financial Studies 6, 327-343, 1993.
 \bibitem{KT81} S. Karlin and H.M. Taylor. \textit{A Second Course in Stochastic Processes.} Academic Press Inc., Harcourt Brace Jovanovich Publishers, 1981.
 \bibitem{LEJAY10} A. Lejay. \textit{Controlled Differential Equations as Young Integrals : A Simple Approach.} Journal of Differential Equations 248, 1777-1798, 2010.
 \bibitem{MARIE15_singular} N. Marie. \textit{Singular Equations Driven by an Additive Noise and Applications.} Communications on Stochastic Analysis 9, 3, 309-341, 2015.
 \bibitem{NUALART06} D. Nualart. \textit{The Malliavin Calculus and Related Topics.} Springer, 2006.
 \bibitem{RY99} D. Revuz and M. Yor. \textit{Continuous Martingales and Brownian Motion. Third Edition.} A Series of Comprehensive Studies in Mathematics 293, Springer, 1999.
 \bibitem{ROGERS97} L.C.G. Rogers. \textit{Arbitrage with Fractional Brownian Motion.} Mathematical Finance 7, 1, 95-105, 1997.
 \bibitem{VOR15} R. Vilela Mendes, M.H. Oliveira and A.M. Rodrigues. \textit{The Fractional Volatility Model: No-Arbitrages, Leverage and Completeness.} Physica A 419, 470-478, 2015.
\end{thebibliography}
\end{document}